\documentclass[a4paper,12pt]{article}
\usepackage[top=1.25in,left=1in,right=1in,bottom=1.25in]{geometry}           
\usepackage{latexsym}
\usepackage{epsfig, ecltree, epic, eepic}
\usepackage{enumerate,amsmath,amssymb,dsfont,pifont,amsthm}
\usepackage[dvips]{color}
\usepackage{graphicx}
\usepackage[arrow, matrix, curve]{xy}

%Aufbau der Seiten:
\pagestyle{headings}

\setlength{\parskip}{6pt}
\setlength{\parindent}{0pt}
\parindent 0.0cm
\sloppy
\frenchspacing

%%% neue Befehle %%%%%%%%%%%%       \newcommand{\n}{\a}        %%%%%%%%%%%%%%%%%%%%%
\theoremstyle{plain}
\newtheorem{theorem}{Theorem}[section]
\newtheorem{corollary}[theorem]{Corollary}

\newtheorem{proposition}[theorem]{Proposition}

\theoremstyle{definition}

\theoremstyle{remark}
\newtheorem{remark}[theorem]{Remark}
\newtheorem{remarks}[theorem]{Remarks}
\newtheorem{example}[theorem]{Example}

\newcommand{\bbc}{\mathbb{C}}
\newcommand{\bbr}{\mathbb{R}}

\newcommand{\bbp}{\mathbb{P}}
\newcommand{\bbe}{\mathbb{E}}

\newcommand{\bbn}{\mathbb{N}}

\newcommand{\cf}{\mathcal{F}}

\newcommand{\abs}[1]{\left| #1 \right|}
\newcommand{\norm}[1]{\left\| #1 \right\|}

                             %M.-Theorie
\newcommand{\roi}[2]{\left[ #1 , #2 \right[}

\newcommand{\ci}[2]{\left[ #1, #2 \right]}

\newcommand{\rosi}[2]{\roi{[#1}{#2[}}

\newcommand{\csi}[2]{\ci{[#1}{#2]}}

%%%%%%%%%%%%%%%%%%%%%%%%%%%%%%%%%%%%%%%%%%%%%%%%%%%%%%%%%%%%%%%%%%%%%%%%%%%%%%%%%%%%%%

\begin{document}

\allowdisplaybreaks

\title{\bfseries SDEs Driven By SDE Solutions}

\author{%
    \textsc{Alexander Schnurr$^*$}\\[5mm]
    \small Lehrstuhl IV, Fakult\"at f\"ur Mathematik, Technische Universit\"at Dortmund,\\[-2mm]
\small D-44227 Dortmund, Germany, alexander.schnurr@math.tu-dortmund.de 
}

\date{\today}

\maketitle
%%% abstract %%%%%%%%%%%%%%%%%%%%%%%%%%%%%%%%%%%%%%%%%%%%%%%%%%%%%%%%%%%%%%%%%%%%%%%%%%
\begin{abstract}
We consider stochastic differential equations (SDEs) driven by Feller processes which are themselves solutions of multivariate L\'evy driven SDEs. The solutions of these `iterated SDEs' are shown to be non-Markovian. However, the process consisting of the driving process and the solution is Markov and even Feller in the case of bounded coefficients. The generator as well as the semimartingale characteristics of this process are calculated explicitly and fine properties of the solution are derived via the probabilistic symbol. A short simulation study and an outlook in the direction of stochastic modeling round out the paper. 
\end{abstract}

\emph{MSC 2010:} 60J75 (primary), 47G30, 60H20, 60G17 (secondary)

\emph{Keywords:} fine properties, It\^o process, stochastic differential equation, symbol.

%\emph{Running title:} SDEs driven by SDE solutions.
%%% introduction %%%%%%%%%%%%%%%%%%%%%%%%%%%%%%%%%%%%%%%%%%%%%%%%%%%%%%%%%%%%%%%%%%%%%%
\section{Introduction and General Setting}

It is a well known fact that, under appropriate regularity conditions (e.g. Lipschitz) of the  coefficient $\Phi$, the multivariate L\'evy driven stochastic differential equation (SDE)
\begin{align} \begin{split} \label{levysde}
  dY_t&= \Psi(Y_{t-}) \, dZ_t \\
   Y_0&= y
\end{split} \end{align}
admits a unique strong solution on a suitable extension of the original stochastic basis, i.e. the one on which the driving $m$-dimensional L\'evy process is defined (cf. \cite{protter} Section V.6). In \cite{sdesymbol} it was shown that if $\Psi:\bbr^n\to\bbr^{n\times m}$ is Lipschitz continuous and bounded the solution $Y$ is a Feller process with symbol
\[
  p(y,\xi)=\psi(\Psi(y)'\xi)
\]
where $\psi:\bbr^m\to\bbc$ is the symbol, i.e. the characteristic exponent, of the driving process. 
In particular the process $Y$ as well as the symbol $p$ are state-space dependent which allows for more flexibility in stochastic modeling. In \cite{sdesymbol} the symbol and the related indices where used to obtain fine properties of the solution $Y=(Y_t)_{t\geq 0}$. It is a natural question to ask whether this procedure can be iterated, i.e. use the process $Y$ itself as a driving term and consider the SDE
\begin{align} \begin{split} \label{itosde}
  dX_t&= \Phi(X_{t-}) \, dY_t \\
   X_0&= x
\end{split} \end{align} 
where $\Phi:\bbr^d\to\bbr^{d\times n}$ is again Lipschitz continuous. In Section 2 we show that the process $X$ is in general not Markovian. This means in particular that it does not have a symbol - at least not in the classical sense -, since it does not have a generator. However, we will show in Section 3 how the symbol of the bivariate process $(X,Y)$ can be used to obtain fine properties of $X$ in the bounded-coefficient case. To our knowledge this is the first time that the symbol is used to analyze fine properties of a non-Markovian process. Section 4 contains a simulation study which we have included to illustrate what the process $Z$ might look like. The last section contains an outlook toward stochastic modeling of financial data. In particular we set our class of processes in relation to other models which where introduced recently.

Since some of the process classes are not defined in a unique way, let us first fix some terminology: a Markov process $(\Omega, \cf, (\cf_t)_{t\geq 0}, (X_t)_{t\geq 0} ,\bbp^x)_{x\in\bbr^d}$ is defined in the sense of Blumenthal-Getoor (cf. \cite{blumenthalget}) this means in particular that the following formula holds
\begin{align} \label{universalmp}
  P_{s,t}^w(x,A) = P_{0,t-s}^{x}(x,A)=:P_{t-s}(x,A)
\end{align}
where $P_{s,t}^w(x,A)$ is the regular version of $\bbp^w(X_t\in A \, | \, X_s=x)$ with $w,x\in\bbr^d$, $s\leq t$ and $A$ is a Borel set in $\bbr^d$. In order to emphasize the difference to Markov processes which are only defined for a single starting point, we call these processes universal Markov (cf. \cite{bauer}, \cite{jacob3}).
As usual, we associate a semigroup $(T_t)_{t\geq 0}$ of operators on $B_b(\bbr^d)$ with every such process by setting
\[
    T_t u(x):= \bbe^x u(X_t), \quad t\geq 0,\; x\in \bbr^d.
\]
Denote by $C_\infty(\bbr^d)$ the space of all
functions $u:\bbr^d\to\bbr$ which are continuous and vanish at
infinity, i.e. $\lim_{\norm{x}\to\infty}u(x) =0$; then
$(C_\infty(\bbr^d),\norm{\cdot}_\infty)$ is a Banach space and $T_t$ is for
every $t$ a contractive, positivity preserving and sub-Markovian
operator on $B_b(\bbr^d)$. We call $(T_t)_{t\geq 0}$ a Feller
semigroup and $(X_t)_{t\geq 0}$ a Feller process, if the semigroup is strongly continuous and if the following condition is satisfied:
\begin{align}
  T_t:C_\infty(\bbr^d) \to C_\infty(\bbr^d) \text{ for every }t\geq 0.
\end{align}

Without loss of generality we assume all Feller processes we encounter to be c\`adl\`ag (cf. \cite{revuzyor} Theorem III.2.7). The generator of the Feller process $(A,D(A))$ is the closed operator given by
\begin{gather}\label{generator}
    Au:=\lim_{t \downarrow 0} \frac{T_t u -u}{t} \qquad\text{for\ \ } u\in D(A)
\end{gather}
where the domain $D(A)$ consists of all $u\in C_\infty(\bbr^d)$ for which the limit \eqref{generator} exists uniformly. A Feller process is called rich if $C_c^\infty(\bbr^d)\subseteq D(A)$.
By a classical result due to P.\ Courr\`ege \cite{courrege} the generator of a rich Feller process is (restricted to the test functions $C_c^\infty(\bbr^d)$) a pseudo differential operator with symbol $-p(x,\xi)$, i.e.\ $A$ can be written as
\begin{gather} \label{pseudo}
    Au(x)= - \int_{\bbr^d} e^{ix'\xi} p(x,\xi) \widehat{u}(\xi) \,d\xi, \qquad u\in C_c^\infty(\bbr^d)
\end{gather}
where $\widehat{u}(\xi)=(2\pi)^{-d}\int e^{-iy'\xi}u(y) dy$ denotes the Fourier transform. The function $p:\bbr^d \times \bbr^d \to \bbc$ is locally bounded and, for fixed $x$, a continuous negative definite function in the sense of Schoenberg in the co-variable $\xi$ (cf. \cite{bergforst} Chapter II). This means it admits a L\'evy-Khintchine representation
\begin{align} \label{lkfx}
  p(x,\xi)=
  -i \ell'(x)  \xi + \frac{1}{2} \xi'Q(x) \xi -\int_{w\neq 0} \left( e^{i \xi' w} 
  -1 - i \xi' w \cdot \chi(w)\right)N(x,dw)
\end{align}
where $\ell(x)=(\ell^{(j)}(x))_{1\leq j \leq d} \in \bbr^d$, $Q(x)=(Q^{jk}(x))_{1\leq j,k \leq d}$ is a symmetric positive semidefinite matrix and $N(x,dw)$ is a measure on $\bbr^d\setminus\{0\}$ such that $\int_{w\neq 0} (1 \wedge \norm{w}^2) \,N(x,dw) < \infty$ and $\chi$ is a cut-off function (see below). The function $p:\bbr^d\times \bbr^d\to \bbc$ which is often written as $p(x,\xi)$ is called the symbol of the operator. For details on the rich theory of the interplay between processes and their symbols we refer the reader to \cite{jacob1,jacob2,jacob3}.

A universal Markov process $X$ is called Markov semimartingale, if $X$ is for every $\bbp^x$ a semimartingale. In \cite{mydiss} it is shown that every rich Feller process is a Markov semimartingale and even an It\^o process in the sense of \cite{vierleute} and that the triplet $(\ell(x),Q(x),N(x,dw))$ appears again in the semimartingale characteristics
\begin{align*}
  B^X_t(\omega)&=\int_0^t \ell(X_s(\omega)) \ ds \\
  C^X_t(\omega)&=\int_0^t Q (X_s(\omega)) \ ds \\
  \nu^X(\omega;ds,dw)&=N(X_s(\omega),dw)\ ds.
\end{align*}
It\^o processes admit an extended generator (cf. \cite{vierleute} Definition 7.1) which coincides with the ordinary generator on $C_c^2(\bbr^d)$ for rich Feller processes. The probabilistic symbol of $X$ is defined as follows (cf. \cite{mydiss} Definition 4.3): fix a starting point $x$ and define $\sigma=\sigma^x_R$ to be the first exit time from the ball of radius $R > 0$ with respect to $\bbp^x$ $(x\in\bbr^d)$:
\begin{align} \label{stoppingtime}
  \sigma:=\sigma^x_R:=\inf\big\{t\geq 0 : \norm{X_t-x} > R \big\}.
\end{align}
The function $p:\bbr^d\times \bbr^d \rightarrow \bbc$ given by
  \begin{gather} \label{symbol}
    p(x,\xi):=- \lim_{t\downarrow 0}\bbe^x \left(\frac{e^{i(X^\sigma_t-x)'\xi}-1}{t}\right)
  \end{gather}
is called the probabilistic symbol of the process, if the limit exists for every $x,\xi\in\bbr^d$ independently of the choice of $R>0$. In \cite{mydiss} Theorem 4.4 it was shown that the probabilistic symbol exists for every It\^o process for which the functions $x\mapsto\ell(x)$, $x\mapsto Q(x)$ and $x\mapsto \int_{w\neq 0} (1\wedge \norm{w}^2) \, N(x,dw)$ are finely continuous (cf. \cite{blumenthalget} Section II.4) and locally bounded. Let us emphasize that these assumptions are very weak, even weaker than ordinary continuity which is in turn fulfilled by virtually all examples of It\^o processes in the literature. Furthermore it was shown that the probabilistic symbol coincides with the ordinary symbol in the rich-Feller-case. A posteriori this justifies the name.

An important subclass of rich Feller processes is the class of L\'evy  processes. These are c\`adl\`ag processes which have stationary and independent increments. The symbol of a L\'evy process $Z$ does not depend on the starting point and coincides with the characteristic exponent $\psi$ (cf. \cite{kyprianou} Definition 1.5) which is given by
\[
  \bbe^0 e^{i Z_t'\xi}=e^{-t\psi(\xi)}.
\]
It is a well known fact that there is a 1:1-correspondence between L\'evy processes and continuous negative definite functions (cf. e.g. \cite{sato}). In the case of nice Feller processes the problem whether a state-space dependent symbol yields a process is part of ongoing research (cf. \cite{hoh00}, \cite{hoh02} and for a survey \cite{jac-sch-survey}).

Most of the notation we are using is standard. In the context of semimartingales we mainly follow \cite{jacodshir}. The only difference is that we write $\chi(y)\cdot y$ for the truncation function, where $\chi$ is measurable with compact support and equal to 1 in a neighborhood of zero. A possible way to choose the cut-off functions $\chi$ in different dimensions $m\in\bbn$ is as follows: take a one-dimensional cut-off function $\chi:\bbr\to\bbr$ and define for $x\in\bbr^m$: $\tilde{\chi}(x):=\chi(x^{(1)})\cdots \chi(x^{(m)})$. Vectors are column vectors. The transposed vector or matrix is written as $v'$ or $Q'$. In Section 2 we will consider vector-valued processes consisting of $X_t\in\bbr^d$ and $Y_t\in\bbr^n$. In this context we write $(X,Y)$ instead of $(X',Y')'$ in order to keep the notation simple. 

%%% section 2 %%%%%%%%%%%%%%%%%%%%%%%%%%%%%%%%%%%%%%%%%%%%%%%%%%%%%%%%%%%%%%%%%%%%%%%%
\section{Markov Solutions of SDEs}

As described in the introduction we want to use the symbol in order to analyze the process $X$ given as the solution of equation \eqref{itosde} which we restate here in a slightly different form in order to emphasize the dependence on the starting points
\begin{align*} \tag*{(2)*} \begin{split} 
  dX^{x,y}_t&= \Phi(X^{x,y}_{t-}) \, dY^y_t \\
   X^{x,y}_0&= x.
\end{split} \end{align*} 
In this section we will be more general: instead of restricting us to solutions of \eqref{levysde}  the driving term $Y$ is defined to be an It\^o process in the sense of \cite{vierleute}.  This encompases the setting described in Section 1 by Theorem 3.10 of \cite{mydiss}, which states that every rich Feller process is an It\^o process. Let us remark that this more general setting is still in the spirit of the title of the present paper since every It\^o process is a solution of an SDE of Skorokhod type (cf. \cite{cinlarjacod} Theorem 3.13) on a suitable extension of the underlying probability space. Since we want to use concepts like the generator and the symbol of the process, we have to deal with Markov semimartingales. The problem we are facing is due to Theorem 1 of \cite{jacodprotter}:

\begin{theorem} \label{thm:mustbelevy}
Let $Y$ be a Markov semimartingale and $\Phi:\bbr\to\bbr$ Borel measurable and never zero. If the equation \eqref{itosde} admits a unique solution $X$ which is a Markov process such that \eqref{universalmp} holds then $Y$ is a L\'evy process.
\end{theorem}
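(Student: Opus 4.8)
Write $X=X^{x,y}$ for the solution of \eqref{itosde} with $X_0=x$, and regard the starting point $y=Y_0$ of the driver as fixed. The first move is to \emph{invert} the equation. Since $\Phi$ never vanishes, the predictable process $1/\Phi(X_{s-})$ multiplied by $\Phi(X_{s-})$ equals the constant $1$; by associativity of the stochastic integral (cf.\ \cite{protter}, Ch.\ IV) this already makes $1/\Phi(X_{s-})$ an admissible integrand with respect to $X$ — no local boundedness is required — and
\begin{align*}
 Y_t-y=\int_0^t\frac{1}{\Phi(X_{s-})}\,dX_s\,.
\end{align*}
Hence the driver is recovered as a measurable functional of the path of $X$; more precisely, for every $s\ge 0$ the increment $Y_{s+r}-Y_s=\int_s^{s+r}\Phi(X_{u-})^{-1}\,dX_u$ is the \emph{same}, shift-invariant functional $\Psi_r$ evaluated on the post-$s$ trajectory $(X_{s+v})_{0\le v\le r}$. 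In Markov-process language, $t\mapsto Y_t-y$ is a (real-valued, c\`adl\`ag) additive functional of $X$.

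Next I would exploit that $X$ is a homogeneous universal Markov process. By the Markov property at the deterministic time $s$, the conditional law of $(X_{s+v})_{0\le v\le r}$ given $\cf_s$ is governed by $X_s$ alone; pushing this through $\Psi_r$ — and using that a stochastic integral is determined by the joint law of integrand and integrator, so that $\Psi_r$ is transported correctly (a Yamada--Watanabe-type measurability fact) — yields
\begin{align*}
 \bbe^x\big(f(Y_{s+r}-Y_s)\,\big|\,\cf_s\big)=\phi_r(X_s),\qquad \phi_r(b):=\bbe^b f(Y_r-y),
\end{align*}
for every bounded measurable $f$, with $\phi_r$ \emph{independent of} $x$. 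This already makes the mechanism transparent: the Markov property of $Y$ would let the future increments of $Y$ depend on the past through $Y_s$, while the Markov property of $X$ forces them to depend on the past only through $X_s$; since $Y_s=y+\int_0^s\Phi(X_{u-})^{-1}\,dX_u$ is generally not a function of $X_s$, the two are compatible only if that dependence is vacuous. Concretely, $Y$ is a L\'evy process (with $Y_0=y$) as soon as one knows that $\phi_r$ is a \emph{constant} function of $b$: then the right-hand side above is deterministic, $Y_{s+r}-Y_s$ is independent of $\cf_s\supseteq\sigma(Y_u:u\le s)$ with a law depending only on $r$, and a c\`adl\`ag process with independent, stationary increments is L\'evy.

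The entire difficulty is therefore the constancy of $b\mapsto\phi_r(b)$ — equivalently, that the law of the recovered driver does not depend on where the solution of \eqref{itosde} is started. Part of this is for free: since the driver $Y$ is prescribed independently of the starting point $x$ of $X$, its law under $\bbp^x$ does not depend on $x$, so $\bbe^x f(Y_{s+r}-Y_s)$ is $x$-free; combined with the previous display, $P_s\phi_r$ is a constant function for every $s>0$, where $(P_s)$ is the transition semigroup of $X$. One cannot finish by letting $s\downarrow 0$, because $P_s(b,\cdot)\to\delta_b$ only weakly while $\phi_r$ need not be continuous ($\Phi$ is merely Borel). Bridging this gap is the real obstacle and is precisely the content of Theorem~1 of \cite{jacodprotter}: one realises the solutions $X^b$, $b\in\bbr$, on a single probability space driven by one copy of $Y$ (possible thanks to the existence-and-uniqueness hypothesis, via Yamada--Watanabe), notes that $Y_{s+r}-Y_s=\Psi_r((X^b_{s+v})_v)$ holds \emph{simultaneously} for all $b$, and invokes the (strong) Markov property of $X$ at instants at which a trajectory started from $b$ passes through a state $b'$ to deduce $\phi_r(b)=\phi_r(b')$; by right-continuity of the paths at time $0$ enough pairs $(b,b')$ are reachable, and together with the regularity of $r\mapsto\phi_r$ inherited from the c\`adl\`agness of $Y$ this forces $\phi_r$ to be constant. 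Given that, the conclusion follows as above, and for the remaining details one may simply refer to \cite{jacodprotter}.
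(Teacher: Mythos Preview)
The paper does not supply its own proof of this theorem: it is quoted verbatim as ``Theorem 1 of \cite{jacodprotter}'' and used as a black box to motivate the passage to the pair $(X,Y)$. Your proposal is therefore not competing with any argument in the paper; it is an expository sketch of the Jacod--Protter result, and you yourself defer the decisive step (constancy of $b\mapsto\phi_r(b)$) to the same reference. In that sense your write-up is already strictly more than what the paper offers.

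On the substance of your sketch: the inversion $Y_t-y=\int_0^t\Phi(X_{s-})^{-1}\,dX_s$ and the reduction to showing that the conditional law of $Y_{s+r}-Y_s$ given $\cf_s$ is deterministic are exactly the right moves, and you correctly flag the one genuine technicality, namely that the stochastic integral $\Psi_r$ is not a pathwise functional, so transporting it through the Markov property requires a measurable-selection/Yamada--Watanabe argument. One remark: once you have $\bbe^x[f(Y_{s+r}-Y_s)\mid\cf_s]=\phi_r(X_s)$ with $\phi_r(b)=\bbe^b f(Y_r-y)$, the constancy of $\phi_r$ is almost immediate under your own set-up, because the driver $Y$ is \emph{the same} process under every $\bbp^b$ (only the starting point of $X$ varies), so $\bbe^b f(Y_r-y)$ does not depend on $b$ --- the elaborate ``reachability'' paragraph is more machinery than the situation calls for. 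The honest residual work is the measurability of $\Psi_r$ alluded to above, and for that your reference to \cite{jacodprotter} is appropriate and matches the paper.
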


%\begin{proof}
%Since $\Phi$ is never zero, by \cite{jacodprotter} Theorem 1 the driving term of the SDE
%\begin{align} \begin{split} \label{phiplusa}
%  X^{x,y}_t&= x+ \int_0^t \Phi(X^{x,y}_{s-}) \, dY^y_s
%\end{split} \end{align} 
%has to be a L\'evy process for every fixed $y\in\bbr$. 
%\end{proof}

This means that if we want to have a solution which is in the class of processes we are interested in, we just obtain an indexed family $(X^{x,y})_{y\in\bbr^n}$ of which every member is not more general than a solution of \eqref{levysde}. In particular the case described in Section 1 would be excluded, since $Y$ is a L\'evy process only in the trivial case where $\Psi$ is constant. In order to get more general Markov semimartingales one has to study the ($d+n$)-dimensional process $(X,Y)$ instead. This approach is motivated by the following result:

\begin{theorem}
Let $Y$ be an It\^o process in the sense of \cite{vierleute} such that the functions $x\mapsto\ell(x)$, $x\mapsto Q(x)$ and $x\mapsto \int_{w\neq 0} (1\wedge \norm{w}^2) \, N(x,dw)$ are finely continuous and locally bounded. Let $\Phi:\bbr^d\to\bbr^{d\times n}$ be bounded and locally Lipschitz. Denote the probabilistic symbol of $Y$ by $p(y,\eta)$. In the above setting the $(d+n)$-dimensional process $V:=(X,Y)$ consisting of the solution and the driving term of \eqref{itosde} is an It\^o process in the sense of \cite{vierleute} with probabilistic symbol $q:\bbr^{d+n}\times \bbr^{d+n} \to \bbc$ given by
\begin{align} \label{bivsymbol}
  q\left( \binom{x}{y} , \binom{\xi_1}{\xi_2} \right) = p\big( y, \Phi(x)'\xi_1+\xi_2 \big).
\end{align}
\end{theorem}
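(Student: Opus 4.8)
The plan is to realise $V=(X,Y)$ as a stochastic integral driven by $Y$, to read off its semimartingale characteristics, to check that these are of It\^o type with finely continuous and locally bounded differential characteristics, and then to apply Theorem~4.4 of \cite{mydiss}; the formula \eqref{bivsymbol} will then follow by a short computation. First I would settle existence and the Markov property: since $\Phi$ is locally Lipschitz and bounded, \eqref{itosde} has for every initial value a unique strong solution which does not explode (because $\norm{\Phi(X_{s-})}\le\norm{\Phi}_\infty$ and $Y$ is conservative; cf.\ \cite{protter}, Section~V.6), and, running $Y$ under each $\bbp^y$ and solving from $X_0=x$, the resulting family $(\bbp^{(x,y)})_{(x,y)\in\bbr^{d+n}}$ satisfies \eqref{universalmp} for $V$: this is because $Y$ is universal Markov and, by the flow property of SDE solutions, the path of $X$ after time $s$ is a measurable functional of $X_s$ and of the increments of $Y$ after $s$, so that $(X_s,Y_s)$ determines the conditional law of the future of $V$. (Note that by Theorem~\ref{thm:mustbelevy} the component $X$ alone is not Markov, which is exactly why one must pass to $V$.)

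Next I would compute the characteristics of $V$. Set $\widetilde\Phi(x):=\binom{\Phi(x)}{\mathrm{Id}_n}\in\bbr^{(d+n)\times n}$; then \eqref{itosde} reads $V_t=V_0+\int_0^t\widetilde\Phi(V_{s-})\,dY_s$, so $V$ is obtained from $Y$ by componentwise stochastic integration against the locally bounded predictable integrand $\widetilde\Phi(V_{-})$ (local boundedness of the integrand being where boundedness of $\Phi$ enters). Carrying out the canonical decomposition of this integral — equivalently, using the transformation rule for semimartingale characteristics under stochastic integration, cf.\ \cite{jacodshir} — starting from the It\^o-type characteristics $\big(\int_0^\cdot\ell(Y_s)\,ds,\int_0^\cdot Q(Y_s)\,ds,N(Y_s,dw)\,ds\big)$ of $Y$, and discarding the Lebesgue-null set of times $s$ with $V_{s-}\neq V_s$, I would obtain that $V$ is a semimartingale whose characteristics are $\big(\int_0^\cdot\ell^V(V_s)\,ds,\int_0^\cdot Q^V(V_s)\,ds,N^V(V_s,du)\,ds\big)$ with
\begin{align*}
  Q^V(x,y)&=\widetilde\Phi(x)\,Q(y)\,\widetilde\Phi(x)',\\
  \int g(u)\,N^V(x,y,du)&=\int g\big(\widetilde\Phi(x)w\big)\,N(y,dw),\\
  \ell^V(x,y)&=\widetilde\Phi(x)\,\ell(y)+\int_{w\neq0}\big(\chi_{d+n}(\widetilde\Phi(x)w)-\chi_n(w)\big)\,\widetilde\Phi(x)w\ N(y,dw),
\end{align*}
$\chi_{d+n}$ and $\chi_n$ being the cut-off functions in dimensions $d+n$ and $n$; the correction integral in $\ell^V$ converges since its integrand is bounded and, because the cut-offs are $\equiv1$ near the origin and $\Phi$ is bounded, vanishes for $w$ in a neighbourhood of $0$. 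In particular $V$ is an It\^o process in the sense of \cite{vierleute}, which is the first assertion.

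For the symbol I would then check the hypotheses of Theorem~4.4 of \cite{mydiss} for $V$ and read off $q$. Local boundedness of $\ell^V$, $Q^V$ and $(x,y)\mapsto\int(1\wedge\norm u^2)\,N^V(x,y,du)=\int(1\wedge\norm{\widetilde\Phi(x)w}^2)\,N(y,dw)$ follows from boundedness of $\Phi$ together with the corresponding assumption on $Y$. For the fine continuity one uses that the $x$-dependence runs only through the continuous — hence finely continuous — map $\widetilde\Phi$, that the $y$-dependence is finely continuous for $Y$ and transfers to the fine topology of $V$ since under $\bbp^{(x,y)}$ the $Y$-coordinate of $V$ is precisely the driving process started at $y$, and — for the jump part, which is the delicate point — that $\widetilde\Phi(x)$, containing the block $\mathrm{Id}_n$, is injective with singular values bounded below by $1$, so that the pushforward $(\widetilde\Phi(x))_*N(y,\cdot)$ does not collapse jumps of $Y$ and test functions vanishing near $0$ pull back to test functions vanishing near $0$. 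Granting this, Theorem~4.4 of \cite{mydiss} gives that the probabilistic symbol $q$ of $V$ exists and equals the continuous negative definite function built from the triplet $(\ell^V,Q^V,N^V)$ via \eqref{lkfx}.

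Finally I would simplify. Writing $\zeta=\binom{\xi_1}{\xi_2}$ and $\eta:=\widetilde\Phi(x)'\zeta=\Phi(x)'\xi_1+\xi_2$, and using $\zeta'\widetilde\Phi(x)=\eta'$ throughout: the quadratic term is $\tfrac12\zeta'Q^V(x,y)\zeta=\tfrac12\eta'Q(y)\eta$; the image-measure formula turns the L\'evy integral into $\int(e^{i\eta'w}-1-i\eta'w\,\chi_{d+n}(\widetilde\Phi(x)w))\,N(y,dw)$; and the correction term of $\ell^V$ contributes $-i\int(\chi_{d+n}(\widetilde\Phi(x)w)-\chi_n(w))\,\eta'w\,N(y,dw)$ to $-i\,\ell^V(x,y)'\zeta$. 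Adding these, the two occurrences of $\chi_{d+n}(\widetilde\Phi(x)w)$ cancel and the cut-off $\chi_n(w)$ reappears, so that $q(\binom{x}{y},\zeta)=-i\ell(y)'\eta+\tfrac12\eta'Q(y)\eta-\int(e^{i\eta'w}-1-i\eta'w\,\chi_n(w))\,N(y,dw)=p(y,\eta)=p(y,\Phi(x)'\xi_1+\xi_2)$, which is \eqref{bivsymbol}. I expect the main obstacle to be the third step: showing that the regularity hypotheses of Theorem~4.4 of \cite{mydiss} — especially the fine continuity of the jump kernel, which must be verified in the fine topology of the enlarged process $V$ and must survive the pushforward by $\widetilde\Phi(x)$ — are inherited by $V$; the transformation of the characteristics and the final algebraic simplification (including the cancellation of the truncation-function correction, which is precisely what makes \eqref{bivsymbol} come out so cleanly) are then routine.
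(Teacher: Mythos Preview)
Your proposal is correct and follows a route the paper itself acknowledges as an alternative. Both arguments begin identically: one shows that $V=(X,Y)$ is a Markov semimartingale (the paper cites Theorem~(8.11) of \cite{vierleute} in the form of Theorem~2.52 of \cite{mydiss}, you argue via the flow property of the SDE solution) and then reads off the characteristics of $V$ from those of $Y$ via the transformation rule for stochastic integrals (the paper cites \cite{jacodshir} Proposition~IX.5.3; your formulas for $\ell^V,Q^V,N^V$ coincide with the paper's). From this point the two arguments diverge. The paper computes the probabilistic symbol \emph{directly}: it applies It\^o's formula to $e^{i(V^\sigma_t-v)'\xi}$, splits the result into the terms coming from drift, continuous martingale part, and jumps, shows that the martingale contributions vanish in expectation, and then lets $t\downarrow 0$ term by term, using the fine continuity of $\ell,Q,\int(1\wedge\norm{w}^2)N(\cdot,dw)$ for $Y$ (via \cite{blumenthalget} Theorem~II.4.8) to identify the limits. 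You instead verify that the differential characteristics $(\ell^V,Q^V,N^V)$ of $V$ satisfy the hypotheses of Theorem~4.4 of \cite{mydiss} and invoke that theorem as a black box, then do the algebra to collapse the resulting L\'evy--Khintchine expression in $\widetilde\Phi(x)'\zeta$ into $p(y,\Phi(x)'\xi_1+\xi_2)$.

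The trade-off is exactly the one you anticipate and the paper makes explicit in its Remark~(c): the direct It\^o-formula computation is longer and somewhat repetitive, but self-contained and only ever uses the fine continuity of the original data along the paths of $Y$; your route is shorter and more conceptual, but shifts the work into verifying fine continuity of $\ell^V,Q^V,N^V$ in the fine topology of the \emph{enlarged} process $V$. Your sketch of that verification (continuous $x$-dependence through $\widetilde\Phi$, transfer of $y$-fine-continuity because the $Y$-coordinate of $V$ under $\bbp^{(x,y)}$ is $Y$ under $\bbp^y$, and the observation that $\widetilde\Phi(x)$ contains $\mathrm{Id}_n$ so no jump mass is collapsed to $0$) is on the right track; making it airtight is precisely the ``main obstacle'' you flag, and is the reason the paper opted for the hands-on computation instead.
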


\begin{proof}
By Theorem (8.11) of \cite{vierleute} in the version of Theorem 2.52 of \cite{mydiss} the process $(X,Y)$ is a Markov semimartingale. 
Let us denote the differential characteristics of $Y$ with respect to $\chi^Y:\bbr^n\to \bbr$ by $(\ell,Q,N(\cdot,dy))$. To obtain the structure of the characteristics we use \cite{jacodshir} Proposition IX.5.3, written in a suitable form. It shows that the characteristics $(B^V,C^V, \nu^V)$ of the process $V=(X,Y)$ with respect to the cut-off function $\chi^V:\bbr^{d+n}\to\bbr$ are
\begin{align*}
B^V_t&=\begin{pmatrix} \int_0^t \Phi(X_s) \ell(Y_s) \, ds \\ \int_0^t \ell(Y_s) \, ds\end{pmatrix} \\[12mm]
   &+ \int_0^t \int_{\bbr^n \backslash \{0\}} \begin{pmatrix} \Phi(X_{s-})w \\ w \end{pmatrix} \Bigg(\chi^V \begin{pmatrix} \Phi(X_{s-})w \\ w \end{pmatrix} - \chi^Y(w)  \Bigg)N(Y_s,dw) \, ds \\[8mm]
C_t^V&=\left(\begin{array}{c|c}
   \rule[-5mm]{0mm}{10mm}\int_0^t \Phi(X_s) Q(Y_s)(\Phi(X_s))' \, ds  
  &\int_0^t \Phi(X_s)Q(Y_s) \, ds \\ 
     \hline 
   \rule[0mm]{0mm}{8mm}\int_0^t Q(Y_s)(\Phi(X_s))' \, ds 
  &\int_0^t Q(Y_s) \, ds \end{array}\right) \in
  \left(\begin{array}{c|c}
                   \bbr^{d\times d}  &\bbr^{d\times n}  \\ 
     \hline        \bbr^{n\times d}  &\bbr^{n\times n}   \end{array}\right) \\[12mm]
&\hspace*{-7mm}\nu^V(\omega,ds,dw)= \displaystyle f_*^{\omega,s}  (N(Y_s(\omega),dw)) \, ds 
\end{align*}
where
\[
f_*^{\omega,s}(N(Y_s(\omega),dw)) = N(Y_s(\omega), f^{\omega,s} \in dw)
\]
and 
\[
f^{\omega,s}(y):= \binom{\Phi(X_{s-}(\omega))y}{y}.
\]
Switching between the left continuous and the right continuous version of the processes $X$ and $Y$ is possible since we are integrating with respect to Lebesgue measure and since the number of jumps of the processes is countable. The structure of the semimartingale characteristics shows that, the $(d+n)$-dimensional process $V$ is again It\^o.

As the process $V$ is an It\^o process, it admits a symbol. We calculate the symbol in the case $d=n=1$, since the multidimensional version is proved similarly, but the notation becomes more involved. Let $\sigma$ be the stopping time defined in \eqref{stoppingtime}. We apply It\^o's formula for semimartingales (cf. \cite{protter} Theorem II.33) on the function $\exp(i(\cdot-v)'\xi)$ where $v$ denotes the starting point $(x,y)$ of the bivariate process $V:=(X,Y)$. In order to keep the notation simple we write
\[
E(\xi,s):= e^{i(V_{s}^\sigma - v)'\xi} = e^{i (X_s^{\sigma}-x)\xi_1 + i(Y_s^{\sigma}-y)\xi_2} 
\]
and obtain
\begin{equation} \begin{aligned} \label{maineq} 
\frac{1}{t} \bbe^v ( E(\xi,t) -1) &= \underbrace{\frac{1}{t} \bbe^v \int_0^t i\xi_1 E(\xi,s-) \, dX_s^\sigma}_{I}
        + \underbrace{\frac{1}{t} \bbe^v \int_0^t i\xi_2 E(\xi,s-) \, dY_s^\sigma}_{II} \\
       &\hspace{-20mm}-\underbrace{\frac{1}{t} \bbe^v\frac{1}{2} \int_0^t \xi_1^2 E(\xi,s-) \, d[X^\sigma,X^\sigma]^c_s}_{III}
       -             \underbrace{\frac{1}{t} \bbe^v\int_0^t \xi_1\xi_2 E(\xi,s-) \, d[X^\sigma,Y^\sigma]^c_s}_{IV}\\
       &\hspace{-20mm}-\underbrace{\frac{1}{t} \bbe^v\frac{1}{2} \int_0^t \xi_2^2 E(\xi,s-) \, d[Y^\sigma,Y^\sigma]^c_s}_{V} \\
       &\hspace{-20mm}+\underbrace{\frac{1}{t} \bbe^v\sum_{0\leq s \leq t} \Big( E(\xi,s-)(e^{i (\Delta X_s^{\sigma})\xi_1 + i(\Delta Y_s^{\sigma})\xi_2}-1-i\xi_1 \Delta X^\sigma_s - i\xi_2 \Delta Y^\sigma_s) \Big)}_{VI}.
\end{aligned} \end{equation}
By the canonical representation of semimartingales (cf. \cite{jacodshir} II.2.35) we can write $Y$ in the following way 
\[
Y=y+Y_t^{c}+(\chi\cdot \text{id})*(\mu^Y-\nu^Y)_t + (\text{id}-\chi\cdot \text{id}) * \mu_t^{Y}+\int_0^t \ell(Y_s) \, ds.
\]
where (id:\ $w\mapsto w$). For the remainder of the proof we fix the cut-off function $\chi(w)=1_{\{\abs{w}\leq 1\}}$ in order to simplify the calculation. Therefore, we obtain for the first term of \eqref{maineq}
\begin{align}
  \frac{1}{t}\,\bbe^v &\int_{0}^{t} \Big(i \xi_1 E(\xi,s-)\Big) \,dX^\sigma_s \notag\\
  &= \frac{1}{t}\,\bbe^v \int_{0}^{t} \Big(i \xi_1 E(\xi,s-)\Big) d\left(\int_0^s\Phi(X_{r-}) 1_{\csi{0}{\sigma}}(\cdot,r) \,dY_r\right)\notag\\
  &= \frac{1}{t}\, \bbe^v \int_{0}^{t} \Big(i \xi_1 E(\xi,s-)
        \Phi(X_{s-}) 1_{\csi{0}{\sigma}}(\cdot,s) \Big)\,dY_s\notag\\
  &= \frac{1}{t}\, \bbe^v \int_{0}^{t} \Big(i \xi_1 E(\xi,s-)
        \Phi(X_{s-}) 1_{\csi{0}{\sigma}}(\cdot,s) \ell(Y_{s-})\Big) \,ds\label{drift}\\
  &\qquad  + \frac{1}{t}\, \bbe^v \int_{0}^{t} \Big(i \xi_1 E(\xi,s-)
        1_{\csi{0}{\sigma}}(\cdot,s)\Big) d\left(\sum_{0\leq r\leq s} (\Phi(X_{r-})\Delta Y_r 1_{\{\abs{\Delta Y_r}> 1 \}}\right)\label{jumps}
\end{align}
since the integrals with respect to the two other terms are martingales by the following considerations:
\begin{align*}
\left[\int_0^\cdot E(\xi,s-) \, dX_s^{\sigma,c}, \int_0^\cdot E(\xi,s-) \, dX_s^{\sigma,c} \right]_t 
&= \left[\int_0^\cdot E(\xi,s-) \, dX_s^{c}, \int_0^\cdot E(\xi,s-) \, dX_s^{c} \right]_t^\sigma \\
&\hspace{-25mm}= \int_0^t (E(\xi,s-))^2 1_{\csi{0}{\sigma}}(\cdot,s)  (\Phi(X_{s-}))^2 \, d[Y^{c},Y^{c}]_s \\ 
&\hspace{-25mm}= \int_0^t (E(\xi,s-))^2 1_{\csi{0}{\sigma}}(\cdot,s) (\Phi(X_{s-}))^2\, d\left(\int_0^s Q(Y_{r}) \ dr \right) \\ 
&\hspace{-25mm}= \int_0^t (E(\xi,s-))^2 1_{\rosi{0}{\sigma}}(\cdot,s)Q(Y_s)  (\Phi(X_{s-}))^2\, ds
\end{align*}
where we used several well known facts about the square bracket. The last term is uniformly bounded in $\omega$ and therefore
\[
\left[\int_0^\cdot E(\xi,s-) \, dX_s^{\sigma,c}, \int_0^\cdot E(\xi,s-) \, dX_s^{\sigma,c} \right]_t  < \infty \hspace{10mm}\text{ for every } t\geq 0.
\] 
It follows from Corollary 3 of Theorem II.27 of \cite{protter} that $\int_0^t E(\xi,s-) \, dX_s^{\sigma,c}$ is a martingale which is zero at zero and therefore, its expected value is constantly zero. The same is true for the integral with respect to the compensated sum of small jumps, since the function
\[
E(\xi,s-) \ y  \chi(y) \  \Phi(X_{s-}) 1_{\csi{0}{\sigma}}(\cdot,s)
\]
is in the class $F_p^2$ of Ikeda and Watanabe (cf. Section II.3 of \cite{ikedawat}).

The second term of \eqref{maineq} works alike. Putting the continuous parts of terms I and II together we obtain
\begin{align*}
\frac{1}{t}\, \bbe^v &\int_{0}^{t} \left(i \binom{\xi_1}{\xi_2}'\binom{\Phi(X_{s-})}{1} E(\xi,s-)
         1_{\csi{0}{\sigma}}(\cdot,s) \ell(Y_{s-})\right) \,ds \\
&=\frac{1}{t}\, \bbe^v \int_{0}^{t} \left(i \binom{\xi_1}{\xi_2}'\binom{\Phi(X_{s})}{1} E(\xi,s)
         1_{\rosi{0}{\sigma}}(\cdot,s) \ell(Y_{s})\right) \,ds \\
&\xrightarrow[t\downarrow 0]{}  i \binom{\xi_1}{\xi_2}'\binom{\Phi(x)}{1} \ell(y)
\end{align*}
where we have used that the countable number of jump times form a set of Lebesgue measure zero, that the processes $X$ and $Y$ are bounded on $\rosi{0}{\sigma}$ and that $\ell$ is finely continuous (cf. \cite{blumenthalget} Theorem II.4.8).

For term III we have by the same reasoning
\begin{align*}
\bbe^v\frac{-1}{2t} &\int_0^t \xi_1^2 E(\xi,s-) \, d[X^\sigma,X^\sigma]^c_s \\
&=\bbe^v\frac{-1}{2t} \int_0^t \xi_1^2 E(\xi,s-) \, d\Big( \int_0^s (\Phi(X_{r-}))^2 1_{\csi{0}{\sigma}}(\cdot,r)Q(Y_{r}) \, dr \Big) \\
&= -\frac{1}{2} \xi_1^2 \bbe^v \frac{1}{t} \int_0^t  E(\xi,s) \, (\Phi(X_{s}))^2 1_{\rosi{0}{\sigma}}(\cdot,s)Q(Y_{s}) \, ds \\ 
&\xrightarrow[t\downarrow 0]{} -\frac{1}{2} \xi_1^2 (\Phi(x))^2 Q(y)
\end{align*}
and analogously for the terms IV and V
\[
\xi_1\xi_2 \Phi(x) Q(y) -\frac{1}{2} \xi_2^2 Q(y).
\]
Adding the jump parts or terms I and II to term VI we obtain using the short hand
\[
H(\xi,s,w):=E(\xi,s)\left(e^{i (\Phi(X_{s})w)\xi_1 + i w\xi_2}-1-i \binom{\xi_1}{\xi_2}' \binom{\Phi(X_{s})w}{w} 1_{\{\abs{w}\leq 1\}}\right) \, 1_{\rosi{0}{\sigma}}(\cdot,s)
\]
the following (cf. \eqref{jumps})
\begin{align*}
\frac{1}{t}\, \bbe^v &\sum_{0 \leq s \leq t}  E(\xi,s-)\left(e^{i (\Phi(X_{s-})\Delta Y_s^{\sigma})\xi_1 + i(\Delta Y_s^{\sigma})\xi_2}-1-i \binom{\xi_1}{\xi_2}' \binom{\Phi(X_{s-})\Delta Y_s^\sigma}{\Delta Y^\sigma} 1_{\{\abs{\Delta Y_s^\sigma}\leq 1\}}\right)\\
&=\frac{1}{t}\, \bbe^v \int_{[0,t]\times \bbr \backslash \{0 \}} H(\xi,s-,w) \, \mu^Y(\cdot;ds,dw) \\
&=\frac{1}{t}\, \bbe^v \int_{[0,t]\times \bbr \backslash \{0 \}} H(\xi,s-,w) \,  \nu^Y(\cdot;ds,dw) \\
&=\frac{1}{t}\, \bbe^v \int_0^t\int_{ \bbr \backslash \{0 \}} H(\xi,s,w) \, N(Y_s,dw) ds \\
&=\frac{1}{t} \int_0^t \int_{ \bbr \backslash \{0 \}} H(\xi,s,w) \Big(N(Y_s,dw) - N(y,dw)\Big) + \frac{1}{t} \int_0^t \int_{ \bbr \backslash \{0 \}} H(\xi,s,w) N(y,dw) \\
&\xrightarrow[t\downarrow 0]{} 0+ \int_{ \bbr \backslash \{0 \}} \left(e^{i (\Phi(x)w)\xi_1 + i w\xi_2}-1-i \binom{\xi_1}{\xi_2}' \binom{\Phi(x)w}{w} 1_{\{\abs{w}\leq 1\}}\right)
\end{align*}
where we have used that it is possible to integrate `under the expectation' with respect to the compensator of a random measure instead of the measure itself (cf. \cite{ikedawat} Section II.3), if the integrand is in $F_p^1$. This as well as the convergence of the integrals is governed by the well known estimate
\[
\abs{e^{i\xi'w}-1-i\xi'w \chi(w)}\leq const.(1+\norm{\xi}^2)(1\wedge \norm{w}^2)
\]
and the fact that $X$ is bounded on $\rosi{0}{\sigma}$. Hence the result.
\end{proof}

In this case we allow the driving process to be state-space dependent and obtain in the first $d$ components a non-Markov process which is more general than the solution of a L\'evy driven SDE, on which one is thrown back by Theorem \ref{thm:mustbelevy} if one takes only the solution into account. One could speak of $X$ as a hidden Markov model in continuous time with continuous state space.

\begin{remarks} (a) Let us emphasize that the symbol is a canonical object in the following sense: unlike the first characteristic of $(X,Y)$ the symbol does not depend on the choice of the cut-off function.

(b) The $d$-dimensional process $X$ (for fixed $x\in\bbr^d$) is not a homogeneous diffusion with jumps (cf. \cite{jacodshir}). However, the characteristics do admit `differential characteristics' in the sense of \cite{kallsenves}. This is useful if one considers transformations of the process or if one is interested in limit theorems.

(c) Instead of the calculation above one could have also used Theorem 4.4. of \cite{mydiss} which gives a connection between the characteristics of an It\^o process and its symbol. In order to make the present paper more self contained and since this theorem uses some deep results of \cite{vierleute} we decided to include the direct calculation. 
\end{remarks}

Knowing the symbol we can write down the extended generator (cf. \cite{vierleute}) of $V=(X,Y)$  at once by \cite{mydiss} Section 6.1: 
\begin{align*}
\widetilde{A}u(v)=&- \int_{\bbr^{(d+n)}} e^{i v'\xi} q\left( v , \xi \right) \widehat{u}(\xi) \, d\xi = - \int_{\bbr^{(d+n)}} e^{i (x'\xi_1+y'\xi_2)} q\left( \binom{x}{y} , \binom{\xi_1}{\xi_2} \right) \widehat{u}(\xi) \, d\xi\\
     =&\int_{\bbr^{(d+n)}} \bigg(i \ell(y)'(\Phi(x)'\xi_1+\xi_2) - \frac{1}{2} Q(y)(\Phi(x)'\xi_1+\xi_2)(Q(y))' \\
      &+\int_{w\neq 0} \big( e^{i\xi'w}-1-i\xi'w\chi(w) \big) \, f_*^x(N(y,dw)) \bigg)\, d\xi \\
     =&\binom{\Phi(x)'\ell(y)}{\ell(y)} ' \nabla u\left( \binom{x}{y} \right) \\
         &+  \nabla u\left( \binom{x}{y} \right)' \left(\begin{array}{cc} \Phi(x) Q(y)(\Phi(x))'
          & \Phi(x)Q(y)\\ Q(y)(\Phi(x))' & Q(y) \end{array}\right)
            \nabla u\left( \binom{x}{y} \right) \\
      &+\int_{w\neq 0} \Bigg( u\left(\binom{x}{y}+\binom{w_1}{w_2}\right) - u\left(\binom{x}{y} \right)-w'\nabla u\left( \binom{x}{y} \right)\chi\left( w \right)\Bigg) \, f_*^x (N(y,dw))
\end{align*}
where $f^x(\zeta)=\binom{\Phi(x)'\zeta}{\zeta}$ and $u\in C_b^2(\bbr^{d+n})$. If $(X,Y)$ is a Feller process, the usual generator $A$ (as defined in equation \eqref{generator}) coincides with $\widetilde{A}$ on $C_c^2(\bbr^{d+n})$.

Let us remark that the L\'evy measure of the process $(X,Y)$ only lives on the `curve' $\zeta\mapsto \binom{\Phi(x)' \zeta}{\zeta}$. This is natural, since $\Delta X_t=\Phi(X_{t-})' \Delta Y_t$.

\begin{example}
In the situation described in Section 1 where $Y$ is the solution of the L\'evy driven SDE \eqref{levysde} we obtain that the process $(X,Y)$ has the symbol
\begin{align} \label{example}
q\left( \binom{x}{y} , \xi \right) = \Psi(y)'(\Phi(x)'\xi_1 + \xi_2) = \binom{\Phi(x) \Psi(y)}{\Psi(y)} '\xi.
\end{align}
\end{example}

\section{Fine Properties of the Solution}

Coming back to our starting point the coefficients $\Psi$ and $\Phi$ are from now on assumed to be bounded, a condition which is needed in order to obtain the Feller property as well as fine properties. $Y$ denotes the solution of \eqref{levysde} and is the driving term of \eqref{itosde}. 

Even in the L\'evy driven case, it is a difficult question whether the solution is a Feller process (cf. \cite{applebaum} Theorem 6.7.2 and \cite{mydiss} Theorem 2.49). It becomes even more involved for more general driving processes. In the particular case where the driving process is the solution of a L\'evy driven SDE we can plug \eqref{levysde} into \eqref{itosde} in order to obtain:
\begin{align} \begin{split}
dX_t&= \Phi(X_{t-}) \Psi(Y_{t-}) \, dZ_t \\
dY_t&= \Psi(Y_{t-}) \, dZ_t \\
X_0&=x \\
Y_0&=y.
\end{split} \end{align}
This can be written as
\begin{align} \begin{split} \label{multisdeone}
d\binom{X_t}{Y_y} &= \binom{\Phi(X_{t-}) \Psi(Y_{t-})}{\Psi(Y_{t-})} \, dZ_t \\
\binom{X_0}{Y_0} &= \binom{x}{y}.
\end{split} \end{align}
Therefore, the solution is a Feller process by Corollary 3.3 of \cite{sdesymbol}. A simple calculation yields that if $\Phi$ and $\Psi$ are locally Lipschitz, this is as well the case for 
\[
\binom{x}{y} \mapsto \binom{\Phi(x) \Psi(y)}{\Psi(y)}=: \mathfrak{M}(x,y) \in \bbr^{(d+n)\times m}.
\]
Obviously $\mathfrak{M}$ is bounded.

\begin{remark}
This technique of rewriting the SDEs in a $(d+n)$-dimensional way could have been used to obtain the symbol \eqref{example} directly, but only in the case where $Y$ is given as solution of the L\'evy driven SDE. 
\end{remark}

Now we adapt the technical main result of Section 5 of \cite{sdesymbol} to our situation. In order to obtain fine properties of stochastic processes Schilling introduced so called indices (cf. \cite{schilling98}). Since the definition of these indices is rather involved we restrict ourselves here to the upper index $\beta_\infty^x$ which was characterized in \cite{mydiss} Theorem 6.3 in the following way: let $p(x,\xi)$ be a non-trivial (i.e.\ non-constant) symbol of a rich Feller process, then
\[
  \beta_\infty^x = \limsup_{\norm{\eta}\to\infty} \sup_{\norm{w-x}\leq 2/\norm{\eta}} \frac{\log\abs{p(w,\eta)}}{\log\norm{\eta}}.
\]

\begin{proposition} \label{prop:sdeindex}
Let $X$ be the solution process of the SDE \eqref{itosde} and assume $d=n=m$ and
 that $\eta\mapsto\Phi(y)'\eta$ and $\xi\mapsto \Psi(x)'\xi$ are bijective for every $(x,y)\in\bbr^{2m}$. If the driving
L\'evy process has the non-constant symbol $\psi$ and index
$\beta_\infty^\psi$, then the process $(X,Y)$ has, for every
$(x,y)\in\bbr^{2m}$, the upper index
$\beta_\infty^{(x,y)}\equiv\beta_\infty^\psi$.
\end{proposition}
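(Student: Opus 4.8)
The plan is to compute the symbol $q$ of the bivariate process $(X,Y)$ via the formula \eqref{example} and then feed it into the characterization of $\beta_\infty^x$ recalled just above the proposition. By the Example, with $Y$ the solution of the L\'evy driven SDE \eqref{levysde} and $Z$ having characteristic exponent $\psi$, we have
\[
  q\!\left(\binom{x}{y},\binom{\xi_1}{\xi_2}\right) = \psi\big(\Psi(y)'(\Phi(x)'\xi_1+\xi_2)\big).
\]
So the whole problem reduces to understanding the growth in $\norm{\eta}$, where $\eta=(\xi_1,\xi_2)\in\bbr^{2m}$, of $\abs{\psi(\Psi(y)'(\Phi(x)'\xi_1+\xi_2))}$, uniformly over $w=(x',y')'$ in a ball of radius $2/\norm{\eta}$ around the fixed point.

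First I would fix $(x,y)$ and a large $\eta$, and look at the linear map $L_{x,y}\colon\bbr^{2m}\to\bbr^m$ given by $L_{x,y}(\xi_1,\xi_2)=\Psi(y)'(\Phi(x)'\xi_1+\xi_2)=\Psi(y)'\Phi(x)'\xi_1+\Psi(y)'\xi_2$. The bijectivity hypotheses say $\eta\mapsto\Phi(y)'\eta$ and $\xi\mapsto\Psi(x)'\xi$ are bijective linear maps on $\bbr^m$ for all arguments; in particular $\Psi(y)'$ is invertible, so $L_{x,y}$ is surjective with a bounded right inverse, and moreover $\norm{L_{x,y}(\xi_1,\xi_2)}$ is comparable to $\norm{(\xi_1,\xi_2)}$ from above (since $\Phi,\Psi$ are bounded) and, taking $\xi_1=0$, $\norm{L_{x,y}(0,\xi_2)}=\norm{\Psi(y)'\xi_2}\gtrsim\norm{\xi_2}$ with a constant that is locally uniform in $y$ by continuity of $\Psi$ and invertibility of $\Psi(y)'$. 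Thus, as $\norm{\eta}\to\infty$ one can choose the argument $\zeta=L_{x,y}(\eta)$ of $\psi$ to have norm of the exact order $\norm{\eta}$, and no choice of $\eta$ makes $\norm{\zeta}$ blow up faster than $\norm{\eta}$. Since $\psi$ itself satisfies the polynomial growth estimate $\abs{\psi(\zeta)}\le c(1+\norm{\zeta}^2)$ and, being non-constant, the definition of $\beta_\infty^\psi$ gives the matching asymptotics along suitable sequences, I would conclude that $\limsup_{\norm{\eta}\to\infty}\frac{\log\abs{q((x,y),\eta)}}{\log\norm{\eta}}=\beta_\infty^\psi$ for each fixed $(x,y)$.

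Next I would put back the inner supremum over $\norm{w-(x,y)}\le 2/\norm{\eta}$. The point is that as $\norm{\eta}\to\infty$ this ball shrinks to the single point $(x,y)$, and the maps $x\mapsto\Phi(x)$, $y\mapsto\Psi(y)$ are (locally Lipschitz, hence) continuous, so the operator norms $\norm{\Psi(w_2)'\Phi(w_1)'}$ and the lower bounds on $\norm{\Psi(w_2)'\xi_2}$ are uniform over this ball up to a factor tending to $1$. Perturbing $w$ by $O(1/\norm{\eta})$ perturbs the argument of $\psi$ by $O(\norm{\eta}\cdot 1/\norm{\eta})=O(1)$, which is negligible on the logarithmic scale $\log\norm{\eta}$ that defines the index. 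Hence the $\sup$ over the shrinking ball does not change the $\limsup$, and $\beta_\infty^{(x,y)}\equiv\beta_\infty^\psi$.

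The main obstacle I anticipate is making the last two steps genuinely rigorous rather than heuristic: one needs a clean two-sided estimate showing that $\log\abs{q(w,\eta)}/\log\norm{\eta}$ and $\log\abs{\psi(\zeta)}/\log\norm{\zeta}$ have the same $\limsup$ when $\zeta$ ranges over $\{L_{w}(\eta):\norm{w-(x,y)}\le 2/\norm{\eta}\}$, i.e.\ that the reparametrization $\eta\mapsto\zeta$ is bi-Lipschitz with constants that are locally uniform in the base point and hence asymptotically irrelevant. This is exactly the type of argument carried out in \cite{sdesymbol} Section~5 and \cite{mydiss} for the map $\xi\mapsto\Psi(x)'\xi$; here one simply has to check that the composed linear map $L_{x,y}$ inherits the same properties from the bijectivity assumptions on $\Phi(\cdot)'$ and $\Psi(\cdot)'$ and the boundedness of these coefficients. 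Once that lemma is in place, the identification of the index is immediate from the characterization of $\beta_\infty^x$ quoted before the proposition.
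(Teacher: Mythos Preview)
Your approach is different from the paper's and can likely be made rigorous, but it contains one misstatement and does more work than necessary. The paper does not analyze $q((x,y),\eta)=\psi\big(\Psi(y)'(\Phi(x)'\xi_1+\xi_2)\big)$ directly via the index formula. Instead it rewrites the coupled system as a single L\'evy-driven SDE with a \emph{square} $2m\times 2m$ coefficient,
\[
d\binom{X_t}{Y_t}=\begin{pmatrix}\Phi(X_{t-})\Psi(Y_{t-}) & 0\\ 0 & \Psi(Y_{t-})\end{pmatrix}\,d\binom{Z_t}{Z_t},
\]
notes that the ``doubled'' L\'evy process $(Z,Z)$ has symbol $(\xi_1,\xi_2)\mapsto\psi(\xi_1+\xi_2)$ and hence again the upper index $\beta_\infty^\psi$, and then applies \cite{sdesymbol} Theorem~5.7 verbatim, since the block-diagonal coefficient is invertible for every $(x,y)$ by the bijectivity hypotheses on $\Phi(\cdot)'$ and $\Psi(\cdot)'$. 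All of the shrinking-ball and reparametrization work you outline is thereby absorbed into that cited theorem.

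On your argument itself: the map $L_{x,y}\colon\bbr^{2m}\to\bbr^m$ is surjective but has an $m$-dimensional kernel, so your final claim that ``the reparametrization $\eta\mapsto\zeta$ is bi-Lipschitz'' is false---take $\xi_1=-(\Phi(x)')^{-1}\xi_2$ with $\norm{\xi_2}\to\infty$. Your earlier heuristic is actually compatible with this (for the $\limsup$ you only need the upper bound $\norm{L_{x,y}\eta}\le C\norm{\eta}$ together with the \emph{existence} of directions, e.g.\ $\eta=(0,\xi_2)$, along which $\norm{L_{x,y}\eta}$ and $\norm{\eta}$ are comparable), so the slip is in the summary sentence rather than the core idea. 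Still, the non-injectivity means you cannot simply say $L_{x,y}$ ``inherits the same properties'' as the square maps treated in \cite{sdesymbol} Section~5; you would have to redo that analysis for a surjective-but-not-injective coefficient. The paper's doubling trick circumvents exactly this by manufacturing a genuinely bijective coefficient and reducing to the already-proved case.
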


\begin{proof}
We rewrite the SDE \eqref{multisdeone} again in order to obtain a $(2m) \times (2m)$-matrix
\begin{align} \begin{split} \label{multisdetwo}
d\binom{X_t}{Y_t}= \left(\begin{array}{cc} \Phi(X_{t-}) \Psi(Y_{t-}) & 0\\ 
                         0 &  \Psi(Y_{t-}) \end{array}\right)
                  \, d\binom{Z_t}{Z_t}.
\end{split} \end{align}
In particular the driving L\'evy process lives on $\{(x,y)\in\bbr^{2m}: x=y \}$.
The index of the `doubled' L\'evy process $(Z,Z)$ is again $\beta^\psi_\infty$ since
\[
\limsup_{\norm{\binom{\xi_1}{\xi_2}}\to\infty} \frac{\log \abs{\psi(\xi_1+\xi_2)}}{\log\norm{\xi_1+\xi_2}}=\limsup_{\norm{\eta}\to\infty} \frac{\log\abs{\psi(\eta)}}{\log\norm{\eta}}.
\]
As $\xi\mapsto\mathfrak{M}(x,y)'\xi$ is a bijection for every $x,y\in\bbr^m$ we obtain the result by \cite{sdesymbol} Theorem 5.7.
\end{proof}

\begin{remark}
Let us emphasize that in the one-dimensional case the condition bijectivity only means that $\Psi$ and $\Phi$ do never vanish. That is exactly what we encountered in Theorem \ref{thm:mustbelevy}.
\end{remark}

\begin{corollary}
Let the conditions of Proposition \ref{prop:sdeindex} be met. For every $\lambda>\beta^\psi_\infty$ we obtain
\[
\lim_{t\to 0} t^{-1/\lambda} (X-x)^*_t=0 \hspace{10mm} \bbp^x \text{-a.s.}
\]
where $(X-x)^*_t=\sup_{s\leq t} \norm{X_s-x}$.
\end{corollary}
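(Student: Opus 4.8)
The plan is to deduce this corollary from Proposition~\ref{prop:sdeindex} by applying the known ``law of the iterated logarithm at $t=0$'' type estimate that connects the upper index $\beta_\infty$ to short-time growth of the running maximum. Such a result is exactly the kind of fine property that Schilling's indices were designed to capture, and it should be available in \cite{schilling98} or in the form used in \cite{sdesymbol}: for a rich Feller process $V$ with upper index $\beta_\infty^{v}$ at the starting point $v$, one has $\lim_{t\to 0} t^{-1/\lambda}(V-v)^*_t=0$ $\bbp^{v}$-a.s.\ for every $\lambda>\beta_\infty^{v}$. So the first step is simply to invoke that theorem for the bivariate Feller process $(X,Y)$, which we have already established (via \eqref{multisdeone} and Corollary~3.3 of \cite{sdesymbol}) is a rich Feller process with bounded coefficients, and which by Proposition~\ref{prop:sdeindex} has $\beta_\infty^{(x,y)}\equiv\beta_\infty^\psi$ for every $(x,y)\in\bbr^{2m}$.

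The second step is the projection argument. Fix $\lambda>\beta_\infty^\psi$. Applying the cited estimate to $V=(X,Y)$ with $v=(x,y)$ gives
\[
\lim_{t\to 0} t^{-1/\lambda}\,\big\|(X,Y)-(x,y)\big\|^*_t = 0 \qquad \bbp^{(x,y)}\text{-a.s.}
\]
Since $\|X_s-x\|\leq \|(X_s,Y_s)-(x,y)\|$ pointwise, we have $(X-x)^*_t \leq \|(X,Y)-(x,y)\|^*_t$ for every $t$, and hence $t^{-1/\lambda}(X-x)^*_t \to 0$ $\bbp^{(x,y)}$-a.s. Finally one identifies the measure: under the construction, $\bbp^x$ for the process $X$ is the image (or the relevant marginal) of $\bbp^{(x,y)}$ for the pair, so the almost-sure statement transfers verbatim, giving the claim. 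If one wants to be careful about the dependence on $y$, note that $y$ is a fixed parameter throughout Section~3, so ``$\bbp^x$'' here should be read as $\bbp^{(x,y)}$ restricted to events measurable with respect to $X$.

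The main obstacle — and the only non-routine point — is making sure the short-time index estimate is applied under exactly the hypotheses it requires. The characterization of $\beta_\infty^x$ quoted just before Proposition~\ref{prop:sdeindex} is stated for \emph{non-trivial} symbols of rich Feller processes, and the almost-sure maximal estimate likewise needs the Feller/rich structure plus, typically, that the symbol be non-degenerate in a suitable sense near $v$; here non-triviality of $q$ follows from non-triviality of $\psi$ together with the bijectivity assumption on $\mathfrak{M}(x,y)'$, which guarantees $q((x,y),\xi)=\psi(\mathfrak M(x,y)'\xi)$ is non-constant. One should also check that the bijectivity hypothesis of Proposition~\ref{prop:sdeindex} (equivalently, $\Phi,\Psi$ nowhere vanishing in dimension one) is what is needed for the lower-bound half of the index argument to be available, but since we only use the \emph{upper} index and an \emph{upper} bound on the running maximum, the direction we need is the benign one: $\beta_\infty^{(x,y)}\leq\beta_\infty^\psi$ suffices, and that inequality already follows from \eqref{bivsymbol} without any bijectivity. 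Thus, modulo citing the appropriate theorem from \cite{schilling98}/\cite{mydiss}, the proof is a two-line projection.
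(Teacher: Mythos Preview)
Your proposal is correct and follows essentially the same route as the paper: invoke Proposition~\ref{prop:sdeindex} to identify $\beta_\infty^{(x,y)}=\beta_\infty^\psi$, apply the short-time maximal estimate for rich Feller processes (the paper cites Corollary~5.11 of \cite{sdesymbol}) to the bivariate process $(X,Y)$, and then project to the first component via the pointwise norm inequality $\norm{X_s-x}\leq \norm{(X_s,Y_s)-(x,y)}$. The paper makes this last step explicit using the $1$-norm, but otherwise the arguments coincide; your additional remarks on non-triviality of $q$ and on only needing the inequality $\beta_\infty^{(x,y)}\leq\beta_\infty^\psi$ are sound but not required by the paper's proof.
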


\begin{proof}
Since Proposition \ref{prop:sdeindex} holds the result follows from Corollary 5.11 and the estimate
\begin{align} \label{finepropesti}
\norm{X_t-x}_{1,\bbr^d} \leq \norm{X_t-x}_{1,\bbr^d} + \norm{Y_t-y}_{1,\bbr^n}= \norm{\binom{X_t-x}{Y_t-y}}_{1,\bbr^{d+n}}
\end{align}
where we have used the 1-norm since it makes the calculation most simple.
\end{proof}

The subsequent result follows from \eqref{finepropesti} and \cite{sdesymbol} Corollary 5.10 which relies mainly on a theorem due to Manstavi\v{c}ius \cite{manstavicius}. Let us just recall following definition: 
if $\gamma\in]0,\infty[$ and $g$ is an $\bbr^d$-valued function on the interval $[a,b]$ then
\[
  V^\gamma(g; [a,b]) := \sup_{\pi_n} \sum_{j=1}^n \norm{g(t_j)-g(t_{j-1})}^\gamma
\]
where the supremum is taken over all partitions $\pi_n = (a=t_0 <t_1 < \ldots < t_n =b)$ of $[a,b]$ is called the (strong) $\gamma$-variation of $g$ on $[a,b]$.

\begin{corollary}\label{cor:variation}
Let the conditions of Proposition \ref{prop:sdeindex} be met. Then
\[
  V^\gamma(X; [0,T]) < \infty\quad \bbp^x\text{-a.s.\ for every\ \ } T>0
\]
if $\gamma>\beta^\psi$.
\end{corollary}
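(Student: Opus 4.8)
The plan is to reduce the statement about the $\gamma$-variation of $X$ alone to the known $\gamma$-variation bound for the bivariate process $(X,Y)$, exactly as in the preceding corollary. First I would invoke Corollary 5.10 of \cite{sdesymbol} (which rests on Manstavi\v{c}ius' theorem) applied to the process $(X,Y)$: since by Proposition \ref{prop:sdeindex} the upper index $\beta_\infty^{(x,y)}$ of $(X,Y)$ equals $\beta_\infty^\psi$, that corollary gives
\[
  V^\gamma\big((X,Y); [0,T]\big) < \infty \quad \bbp^x\text{-a.s. for every } T>0
\]
whenever $\gamma > \beta^\psi$. (One should check that the hypotheses of Corollary 5.10 of \cite{sdesymbol} — essentially a Feller process with a controlled upper index — are met here; this is exactly the content of Section 3's Feller property via \eqref{multisdeone} together with Proposition \ref{prop:sdeindex}.)

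The second step is the elementary monotonicity argument. For any partition $\pi_n = (0 = t_0 < \cdots < t_n = T)$, the pointwise inequality \eqref{finepropesti} gives $\norm{X_{t_j} - X_{t_{j-1}}}_{1,\bbr^d} \leq \norm{\binom{X_{t_j}-X_{t_{j-1}}}{Y_{t_j}-Y_{t_{j-1}}}}_{1,\bbr^{d+n}}$ for each increment (applied with the obvious shift, since \eqref{finepropesti} is stated with fixed base point but the same bound holds increment-wise because the $1$-norm of a $(d+n)$-vector dominates the $1$-norm of its first $d$ coordinates). Raising to the power $\gamma$, summing over $j$, and taking the supremum over all partitions yields
\[
  V^\gamma(X; [0,T]) \leq V^\gamma\big((X,Y); [0,T]\big),
\]
and combining with the first step gives the claim. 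Here I am implicitly using that the $\gamma$-variation in the definition is computed with respect to a norm, and that all norms on the relevant finite-dimensional spaces are comparable, so the particular choice ($1$-norm) is harmless — the paper already makes this remark.

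The main (and only real) obstacle is making sure the index transfer is legitimate: Corollary 5.10 of \cite{sdesymbol} and the Manstavi\v{c}ius criterion are phrased for the solution of an SDE expressed in the standard form of \eqref{levysde}, so I must confirm that \eqref{multisdeone} genuinely puts $(X,Y)$ in that form with bounded, locally Lipschitz coefficient $\mathfrak{M}$ (done in Section 3) and non-constant symbol, and that the index computed there is $\beta_\infty^\psi$ (done in Proposition \ref{prop:sdeindex}). Everything after that is the trivial norm-domination bookkeeping above. A minor point to state carefully is that \eqref{finepropesti} as written has a redundant term on the right; the intended content is the genuine inequality $\norm{\cdot}_{1,\bbr^d} \leq \norm{\binom{\cdot}{\cdot}}_{1,\bbr^{d+n}}$, which is what feeds the $\gamma$-variation comparison, and I would phrase the proof around that inequality rather than the displayed line verbatim.
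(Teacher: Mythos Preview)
Your proposal is correct and follows essentially the same approach as the paper: the paper simply states that the result follows from the norm inequality \eqref{finepropesti} together with Corollary 5.10 of \cite{sdesymbol} (resting on Manstavi\v{c}ius' theorem), which is exactly the two-step reduction you outline. Your additional remarks about verifying the hypotheses of Corollary 5.10 via the form \eqref{multisdeone} and about the intended content of \eqref{finepropesti} are valid elaborations but do not depart from the paper's argument.
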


We have obtained fine properties of a non-Markov process by using the symbol and the index $\beta^\psi$. In fact one only needs to know the structure of the SDEs and the symbol of the driving L\'evy process in order to obtain these properties. 

Let us just sketch an application in mathematical finance. It is a well known fact that - in the absence of transaction costs - there is an arbitrage (Free Lunch) if for some $\gamma<2$ the $\gamma$-variation is finite and if Stratonovich integrals are considered (cf. e.g. \cite{etheridge} Section 4.1). If one uses a process $X$ (conditioned to stay positive) of the type described above to model asset prices, one should use a driving L\'evy process with index $\beta^\psi = 2$.

\section{A Simulation Study}

For some choices of $Z,\Phi$ and $\Psi$ we have simulated paths of $Z,Y$ and $X$. The simulation of processes given as solutions of SDEs are treated several times and in great detail in the literature (cf. \cite{eulerfeller}, \cite{jacod04} and the references given therein). The crucial point in simulating the solution of such an SDE is always whether it is possible to simulate the increments of the driving term or not. Here we do not want to contribute to this part of the theory but we want to give the reader a flavor on what the process $X$ might look like. We consider the following SDEs:
\begin{align*}
d\binom{Y_t^{(1)}}{Y_t^{(2)}} &= \left(\begin{array}{cc} \sin(Y_t^{(1)}) & 2 Y_t^{(1)} \\ 0 & 1 \end{array}\right)  \, d\binom{Z_t}{t} \\
dX_t&= (\cos(X_t), X_t) \, d\binom{Y_t^{(1)}}{Y_t^{(2)}}.
\end{align*}
In particular $Y_t^{(2)}=t$. Therefore this example shows how to include a driving term $dt$ in the second equation.
For a driving Brownian motion $Z$ traveling at the speed 1000$t$ we get
\begin{center}
\includegraphics[width=45mm]{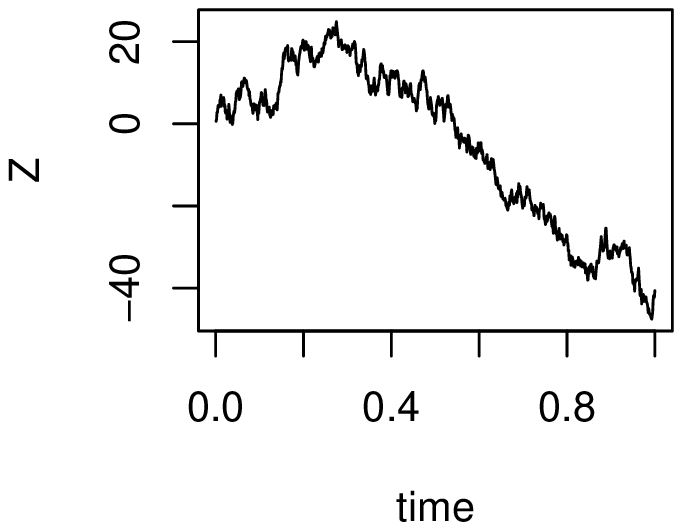}
\includegraphics[width=45mm]{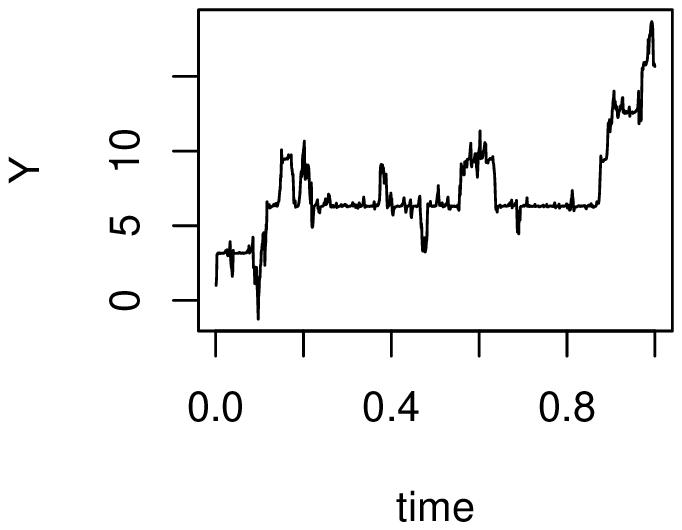}
\includegraphics[width=45mm]{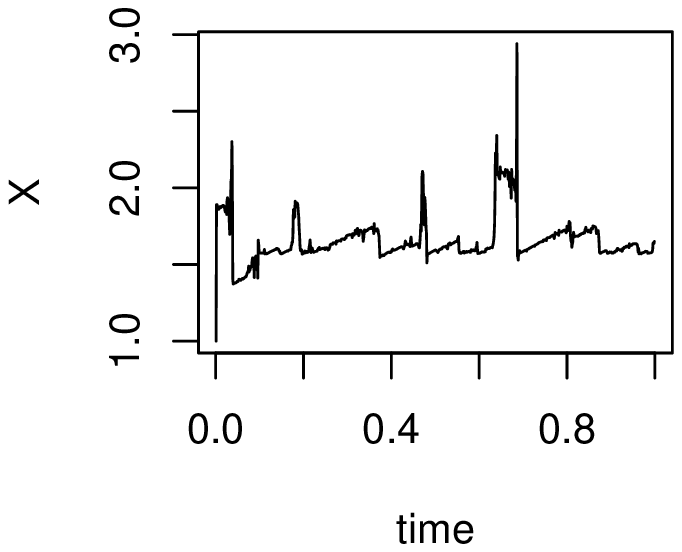}
\end{center}
and for a Cauchy process $Z$ traveling at the speed 1000$t$
\begin{center}
\includegraphics[width=45mm]{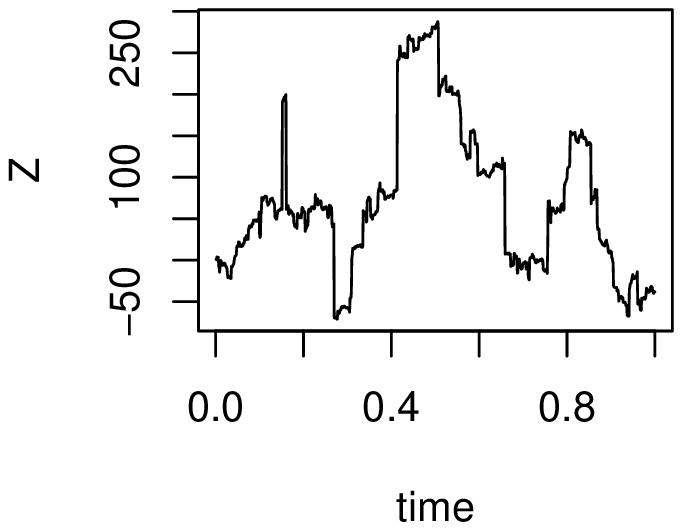}
\includegraphics[width=45mm]{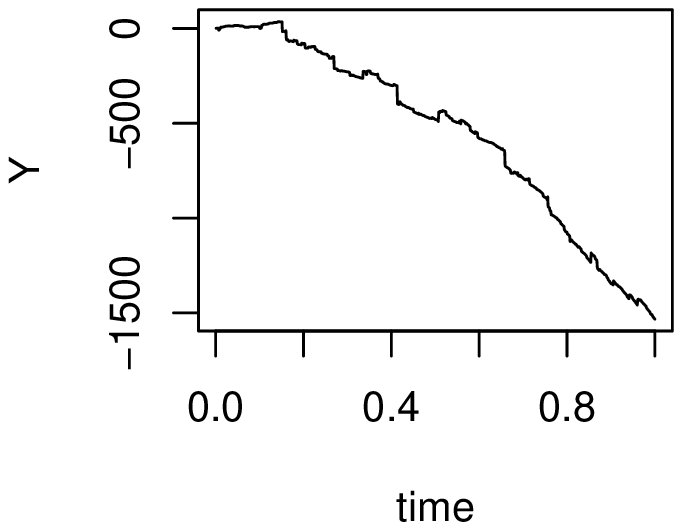}
\includegraphics[width=45mm]{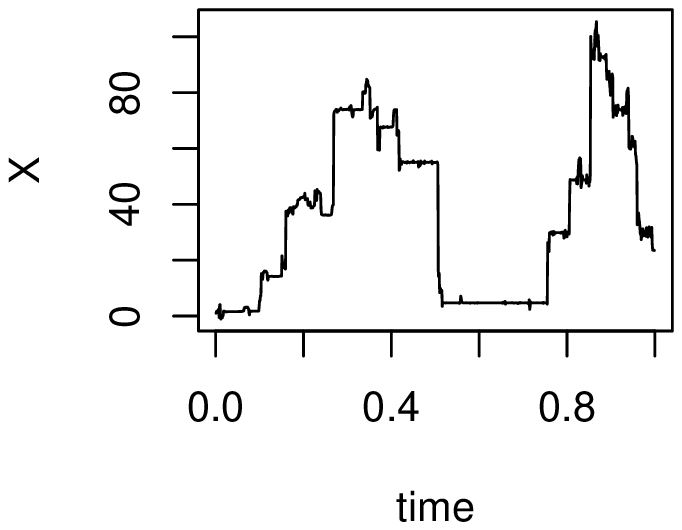}
\end{center}
finally for a Gamma(2) process $Z$ traveling at the speed 100$t$
\begin{center}
\includegraphics[width=45mm]{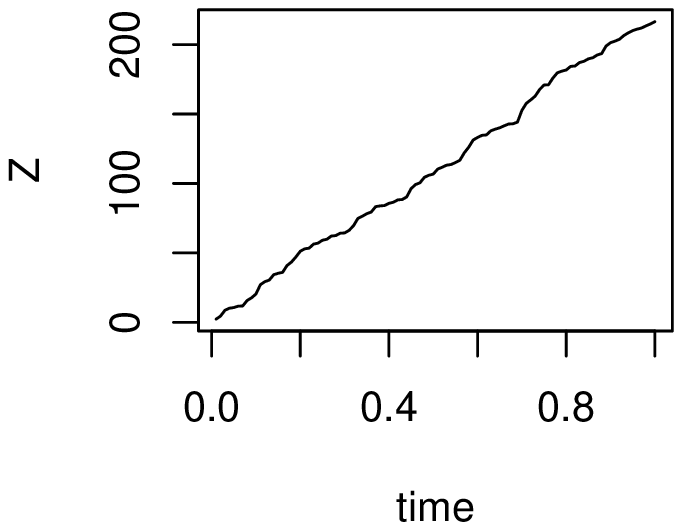}
\includegraphics[width=45mm]{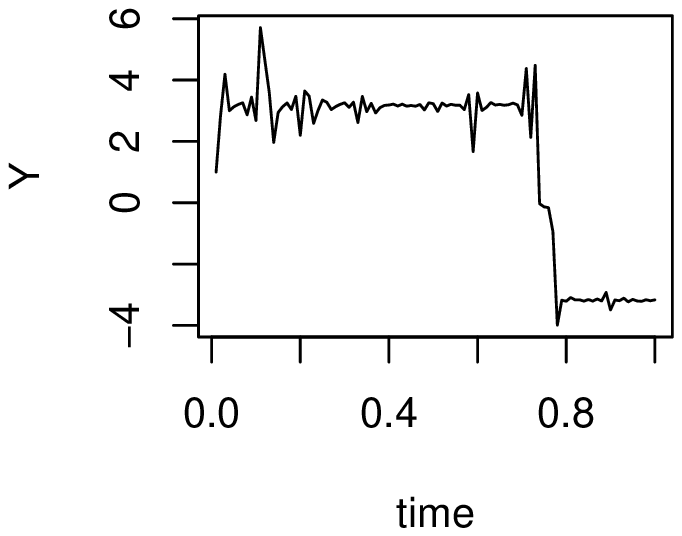}
\includegraphics[width=45mm]{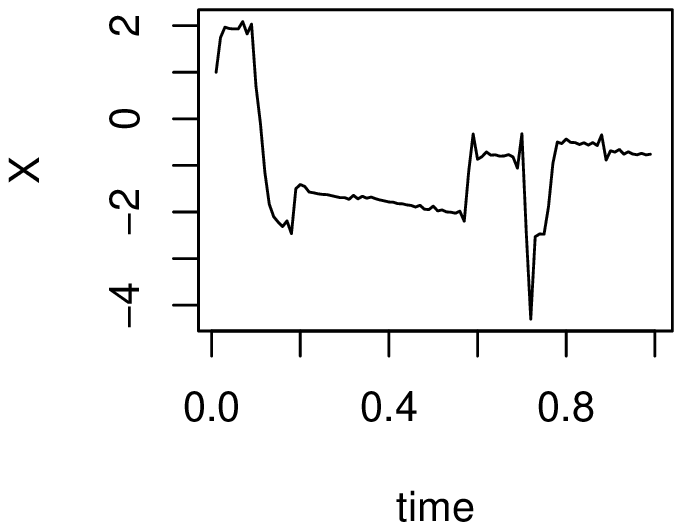}
\end{center}
Let us emphasize that the process $X$, though analytically tractable, shows a different behavior than the It\^o process $Y$. Being precise, the driving process in the examples above is the L\'evy process defined by $(Z_t,t)'$. 

\section{Stochastic Modeling and Complementary Results}

Let us first summerize some of the properties of our process $X$ and emphasize the differences to other L\'evy driven models. First of all the process is automatically a semimartingale. This is interesting in particular for the following two reasons: from the theoretical point-of-view the class of semimartingales is the largest class with respect to which reasonable stochastic integration of predictable processes is possible. This is a classical result which is called Bichteler-Dellacherie theorem (\cite{bichteler} Theorem 7.6). On the other hand, if we want to model asset prices with a locally bounded process, it is again the No Free Lunch property which yields a semimartingale (\cite{del-sch94} Theorem 7.2). In contrast to other models which where introduced recently $X$ can have a part which is of infinite variation on compacts, i.e. a martingale part. This is not the case for the fractional L\'evy processes $M_d$ treated in \cite{ben-lin-sch10}. These processes are semimartingales if and only if they are of finite variation on compacts. In \cite{bas-ped} (Corollary 3.4) this relationship was even shown for general moving average processes $X$ given by $X_t= \int_0^t \phi(t-s) \, dZ_s$ where $Z$ denotes a L\'evy process with unbounded variation, i.e. every such process is an $(\cf_t^Z)_{t\geq 0}$-semimartingale if and only if it is a process of finite variation on compacts. This might lead to problems in modeling asset prices. Compare in this context our remark at the end of Section 3. 

Furthermore the process $X$ admits jumps, this is not the case for classical models which are driven by a Brownian motion or e.g. the well-balanced L\'evy driven Ornstein-Uhlenbeck process which was introduced in \cite{wbLdOU}. Recently there was found strong evidence for the presence of jumps in financial data (cf. e.g. \cite{jacodait}). By choosing the driving L\'evy term in our model appropriately we can control both, the martingale part and the jumps. Let us emphasize that our process is for every starting point an It\^o semimartingale, a class which is often considered in the context of high-frequency financial data (cf. \cite{veraart} and the references given therein). Since the paths allow for splines we might be able to model data from electricity markets which is by now modeled using fractional Brownian motion or ambit fields (cf. \cite{bar-ben-ver} and the references given therein). 

Our process is not Markovian which means that it allows for dependence structures. On the other hand, since it is a `hidden Markov' type of model, it is still analytically tractable and the symbol can be used in order to obtain fine properties, of which some are inherited from the driving term. Last but not least the model offers a lot of flexibility since there is no restriction on the L\'evy process. 

%The boundedness of the coefficients is not a problem, since in the case of real data there is usually a natural bound at least for a finite time horizon. The details on how to use the process as a model are not in the scope of this paper. The procedure described above, take the solution of an SDE as new driving term, can of course be $n$-times iterated. This results in non-Markov processes having `smoother paths'. For modeling purposes, the two-times iteration treated above appears to us most suitable. In the future it is our aim to compare these models with other L\'evy driven models on the basis of real data. 

\textbf{Acknowledgments:} I would like to thank an anonymous referee for carefully reading the manuscript and offering useful suggestions which helped to improve the paper.

%%% literature %%%%%%%%%%%%%%%%%%%%%%%%%%%%%%%%%%%%%%%%%%%%%%%%%%%%%%%%%%%%%%%%%%%%%%%%

\end{document}